\documentclass[11pt]{amsart}

\usepackage{amssymb,amsmath,enumerate,epsfig}
\usepackage{amsfonts,color}
\usepackage{a4,latexsym}
\usepackage{parskip}

\usepackage{hyperref}
\hypersetup{pdftitle=DK11}

\newcommand{\C} {\mathbb{C}}
\newcommand{\Q} {\mathbb{Q}}

\newcommand{\F}{\mathbb{F}}
\newcommand{\Z}{\mathbb{Z}}

\newcommand{\eps}{\varepsilon}

\newcommand{\PP}{\mathbb{P}}
\newcommand{\NS}{\mathop{\rm NS}}

\newcommand{\Br}{\mathop{\rm Br}\nolimits}

\newcommand{\het}[1]{H_\text{\'et}^2(#1,\Q_l)}
\newcommand{\tr}{\mathop {\rm tr}}
\newcommand{\Fr}{\mathop {\rm Frob}}

\newtheorem{Theorem}{Theorem}[section]
\newtheorem{Proposition}[Theorem]{Proposition}

\newtheorem{Corollary}[Theorem]{Corollary}

\theoremstyle{remark}
\newtheorem{Remark}[Theorem]{Remark}

\theoremstyle{definition}

\begin{document}

\title{K3 surfaces with an automorphism of order 11}


\author{Matthias Sch\"utt}
\address{Institut f\"ur Algebraische Geometrie, Leibniz Universit\"at
  Hannover, Welfengarten 1, 30167 Hannover, Germany}
\email{schuett@math.uni-hannover.de}
\urladdr{http://www.iag.uni-hannover.de/schuett/}

\subjclass[2010]{14J28; 14G10, 14J27, 14J50}
\keywords{K3 surface, wild automorphism, Lefschetz fixed point formula}
\thanks{Funding   by ERC StG~279723 (SURFARI)
 is gratefully acknowledged}

\date{September 30, 2013}

 \begin{abstract}
This paper concerns K3 surfaces with automorphisms of order 11 in arbitrary characteristic.
Specifically we study the wild case and prove that a generic such surface in characteristic 11
has Picard number 2.
We also construct K3 surfaces with an automorphism of order 11 in every characteristic,
and supersingular K3 surfaces whenever possible.
 \end{abstract}
 \maketitle


%

\section{Introduction}

Automorphisms of K3 surfaces have gained considerable attention in recent years.
Over $\C$ they usually lend themselves to lattice theoretic investigations
while positive characteristic often requires more delicate considerations.
Order 11 is special in the sense that 
by \cite{N} it is the first prime order which over $\C$  does not allow symplectic automorphisms 
(leaving the regular 2-form invariant).
This result carries over to any positive characteristic
except for $p=11$.
In fact, it is this wild case which our primary interest lies in.
The wild case was studied in great detail in \cite{DK},
and our main aim is to solve the problem of the Picard number which was left open there (see \cite[Rem.~3.6]{DK}):

\begin{Theorem}
\label{thm}
Let $k$ denote an algebraically closed field of characteristic $11$.
Then a generic K3 surface with an automorphism of order $11$ over $k$
has Picard number $\rho=2$.
\end{Theorem}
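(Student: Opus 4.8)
The plan is to combine the detailed study of the pair $(X,\sigma)$ in \cite{DK} with a Lefschetz fixed point computation -- reducing the theorem to the production of a \emph{single} member of the family with Picard number $2$ -- and then to carry out that reduction by an explicit point count over a finite field. Write $\sigma$ for the order $11$ automorphism of the K3 surface $X/k$. Since $11\nmid 11-1$, it acts trivially on $H^0(X,\Omega^2_X)$ and is therefore symplectic, and by \cite{DK} its (wild) fixed locus and the quotient $X/\sigma$ are described explicitly. In particular $X$ admits a $\sigma$-equivariant genus one fibration $X\to\PP^1$, and the classes of a fibre together with a $\sigma$-invariant horizontal divisor span a rank $2$ sublattice of $\NS(X)$, so $\rho\ge 2$ unconditionally. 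As $(\sigma^*)^{11}=1$, the characteristic polynomial of $\sigma^*$ on $\het{X}$ equals $\Phi_1(t)^a\Phi_{11}(t)^b$ with $a+10b=22$; feeding the fixed point data of \cite{DK} -- including the correction terms specific to the wild case -- into the Lefschetz fixed point formula forces $(a,b)=(2,2)$. Hence $\tr(\sigma^*\mid\het{X})=0$ and $\het{X}$ splits $\sigma^*$-equivariantly as $V_1\oplus V$ with $\dim V_1=2$ and $\sigma^*$ acting on the $20$-dimensional space $V$ with characteristic polynomial $\Phi_{11}(t)^2$.

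\emph{Reduction to one example.} Since $\NS(X)$ is $\sigma^*$-stable, $\NS(X)\otimes\Q$ respects the decomposition $V_1\oplus V$; as $\Phi_{11}$ is irreducible of degree $10$, the summand inside $V$ has rank $10j$ with $j\in\{0,1,2\}$, while the summand inside $V_1$ has rank $2$ by the previous step. Thus $\rho\in\{2,12,22\}$ for every member. By \cite{DK} these surfaces form an irreducible family, which may be spread out to a smooth proper family $\mX\to S$ over an integral $\F_{11}$-scheme $S$ whose geometric generic fibre is the generic surface of the theorem (recall that the Picard number of a variety over an algebraically closed field is unchanged under further extension of that field). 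Since the specialisation map on N\'eron--Severi groups along $S$ is injective, the Picard number of the generic fibre is at most that of every closed fibre. It therefore suffices to exhibit one closed fibre $X_0$, defined over a finite field $\F_q$ with $q$ a power of $11$ and with $\sigma$ defined over $\F_q$, for which $\rho(X_0)=2$; then $\rho\le 2$ for the generic surface, hence $\rho=2$.

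\emph{The explicit example.} Starting from the explicit Weierstrass model of \cite{DK}, I would specialise its parameter to a well-chosen value over a suitable $\F_q$, $q=11^n$. Counting $\#X_0(\F_{q^m})$ for enough $m$ -- the amount of data needed being cut down by the functional equation from Poincar\'e duality and by the fact that $\Fr$ commutes with $\sigma^*$, so that $\Fr$ respects $V_1\oplus V$ and one only has to pin down its degree $20$ characteristic polynomial on $V$ -- determines the characteristic polynomial of Frobenius on $\het{X_0}$. By the Tate conjecture for K3 surfaces in characteristic $\ge 5$, $\rho(X_0)$ equals the number of eigenvalues of $\Fr$ of the form $q\zeta$ with $\zeta$ a root of unity, and the computation must show that the factor coming from $V$ contributes none of them, so that the total is exactly $2$. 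Concretely this requires $X_0$ to be \emph{non-supersingular}: the Newton polygon of the crystalline $H^2$ of $X_0$ must differ from the supersingular one, which the same point count detects through the $11$-adic valuations of the Frobenius eigenvalues. (Should $X_0$ happen to carry a genus one fibration with a section and only irreducible fibres, one may instead invoke Shioda--Tate and reduce $\rho(X_0)=2$ to vanishing of the Mordell--Weil rank, read off from the same $L$-function.)

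The genuinely delicate step is this last one. A priori no member of the family over a finite field is known to realise the minimal Picard number, and since Picard numbers only grow under specialisation one may be forced to take $n$ not too small, making the point count heavier; moreover one must rule out the possibility -- a real danger for a family carrying a wild automorphism -- that \emph{every} member is supersingular, which would make the theorem false. The first step hides the bookkeeping of the wild correction terms in the Lefschetz formula, but that is exactly the analysis already carried out in \cite{DK}.
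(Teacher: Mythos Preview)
Your outline is essentially the paper's approach: decompose $\het{X}$ under $\sigma^*$ into $U\oplus V$ with $\dim V=20$, reduce to a single specialisation over a finite field, and compute the Frobenius characteristic polynomial on $V$ by point counting. Three sharpenings in the paper are worth flagging. First, the commutation of $\Fr_p$ and $\varphi^*$ is exploited more fully than you indicate: $\Fr_p$ preserves each $2$-dimensional eigenspace $V_i$, and the relative traces $\tr(\Fr_q^*\mid V_i)$ are recovered from the fixed-point counts $\#\mathrm{Fix}(\varphi^n\circ\Fr_q)$ for $n=0,\dots,10$, each of which reduces to a computation over $\F_q$ itself; this brings the entire calculation down to $\F_{11}$ and $\F_{11^2}$ rather than $\F_{11^{10}}$, eliminating the ``heavier point count'' you worry about. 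Second, the Tate conjecture is not invoked: the computed characteristic polynomial on $V$ is irreducible over $\Q$ and non-integral, hence has no zeroes of the form $p\cdot\zeta$ with $\zeta$ a root of unity, so the inequality $\rho\le 2$ is unconditional. Third, the passage from the jacobian family $\{X_\varepsilon\}$ to the full moduli is done via $\rho(X)=\rho(\mathrm{Jac}(X))$ rather than specialisation, though your specialisation argument works equally well.
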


K3 surfaces with a wild automorphism of order $11$
come in 1-dimensional families
where each surface is a torsor of a jacobian elliptic K3 surface, see Section \ref{s:background}.
%
This allows us to reduce
the proof of Theorem \ref{thm} to the  family of jacobian elliptic K3 surfaces 
with a wild automorphism 
of order $11$ as studied in \cite{DK}.
Then it suffices to exhibit a member of this family with Picard number $\rho=2$.
Our method to achieve this is arguably elementary 
as we simply compute the zeta functions of the elliptic  K3 surfaces over $\F_{11}$ in question.
In contrast, the way to derive the zeta function in an effective fashion is quite non-elementary, 
as it makes essential use of the automorphisms.
As a by-product we also compute the heights of the K3 surfaces over $\F_{11}$
which attain the finite maximum $h=10$ 
(Corollary \ref{Cor:h}).

The paper is organised as follows.
After reviewing the background, particularly from \cite{DK}, in Section \ref{s:background},
we give the proof of Theorem \ref{thm}
in Section \ref{s:proof}.
The paper concludes with considerations for other characteristics in Section \ref{s:p}:
Proposition \ref{prop} states that any characteristic $p$ admits K3 surfaces with automorphisms of order $11$,
and whenever possible (i.e.~$p=11$ or there is some $\nu$ such that $p^\nu\equiv -1\mod 11$), 
there are supersingular examples. 
This answers two questions from \cite[Rem.~6.6]{DK}
in a surprisingly uniform manner.

\section{Background}
\label{s:background}

Let $k$ denote an algebraically closed field of characteristic $p=11$.
Let $X$ be a K3 surface over $k$ admitting an automorphism $\varphi$ of order $11$.
Then by \cite[Thm.~1.1]{DK}
$X$ admits an elliptic fibration that is compatible with $\varphi$ 
and its induced action on $\varphi'\PP^1$.
That is, there is a commutative diagram
$$
\begin{array}{ccc}
X & \stackrel{\varphi}\longrightarrow & X\\
\downarrow && \downarrow\\
\PP^1 &  \stackrel{\varphi'}\longrightarrow & \PP^1
\end{array}
$$
In fact, $X$ is $\langle\varphi\rangle$-equivariantly isomorphic
to a  torsor over a specific jacobian elliptic K3 surface $X_\eps$
which can be given as
\begin{eqnarray}
\label{eq:Xe}
X_\varepsilon:\;\;
y^2 = x^3 + \varepsilon x^2 + t^{11} - t \;\;\; (\varepsilon\in k).
\end{eqnarray}
The K3 surface $X_\varepsilon$ admits an automorphism of order $11$ given by
\begin{eqnarray}
\label{eq:phi}
\varphi: (x,y,t) \mapsto (x,y,t+1).
\end{eqnarray}
Generically the elliptic fibration over $\PP^1_t$
has the following singular fibres:
a cuspidal cubic at $t=\infty$
and 22 nodal cubics at the roots of the affine discriminant
\[
\Delta=-(t^{11}-t)(5t^{11}-5t+4\varepsilon^3).
\]
Visibly, for $\varepsilon=0$, the discriminant degenerates to a square,
and the nodal cubics degenerate to cuspidal ones.
In fact, the special member $X_0$ is isomorphic to the Fermat quartic over $k$.
Here's a conceptual line of argument to see this
which we will refer to again in the proof of Proposition \ref{prop}.
To start with, $X_0$ arises from the singular K3 surface 
\[
X: \;\; y^2 = x^3 + t^{11} - 11 t^6 -t
\]
over $\Q$ by way of reduction.
Here the term {\it singular} refers to the Picard number attaining the maximum of $20$ over $\C$,
and it was proved in \cite{shioda} that $X$ is in fact singular of discriminant $-300$.
In particular, this implies that $X$ is modular,
i.e.~associated to $X$ there is a modular form $f$ with complex multiplication by $\Q(\sqrt{-3})$.
By virtue of the vanishing of the Fourier coefficients of $f$,
$X$ has supersingular reduction at any prime $p_0\equiv -1\mod 3$:
\[
\rho(X\otimes\bar{\F}_{p_0})=22, \;\;\; \text{ and in particular, } \;\; \rho(X_0)=22.
\]
Then by \cite[Proposition 1.0.1]{Shimada} any of the supersingular reductions has Artin invariant $1$.
Another such example is the Fermat quartic in any characteristic $p_1\equiv -1\mod 4$.
But then supersingular K3 surfaces with Artin invariant 1 are unique up to isomorphism over an algebraically closed field by \cite{Ogus}.
Hence $X_0$ is isomorphic to the Fermat quartic over $k$.

Note how we derived in a rather indirect way the Picard number of $X_0$.
In comparison, the Picard number of $X_\eps$ does not seem to be known for $\eps\neq 0$.
In this paper, we will prove that generically 
\[
\rho(X_\eps)=2
\]
by verifying the claim for all $\eps\in\F_p^\times$.
Our proof will make use of Lefschetz' fixed point formula for $l$-adic \'etale cohomology ($l\neq p$).
In the remainder of this section,
we review some basic results from \cite{DK}
and set up the notation.

A key step in \cite{DK} is to show that
\[
\het{X_\eps}^{\varphi^*=1} = U\otimes\Q_l,
\]
where $U$ is the hyperbolic plane generated by general fibre $F$ and zero section $O$.
In particular, the orthogonal complement with respect to cup-product,
\[
V=U^\bot\subset\het{X_\eps},
\]
decomposes into 10 equidimensional eigenspaces for $\varphi^*$.
With $\zeta$ a primitive eleventh root of unity in $\bar\Q_l$,
we obtain the 2-dimensional $\Q_l(\zeta)$-vector spaces
\[
V_i = \het{X_\eps}^{\varphi^*=\zeta^i} \;\;\; (i=0,\hdots,10)
\]
such that $V=\oplus_{i=1}^{10} V_i$ and $V_0 = U\otimes\Q_l$. 
This eigenspace decomposition can be used to show
that $\rho(X_\eps)=2,12$ or $22$.
In fact, there is a simple argument to prove that $\rho\neq 12$
for infinitely many $\varepsilon\in k$.
Namely $X_\eps$ admits a $k$-isomorphic model
\begin{eqnarray}
\label{eq:gamma}
X_\gamma:\;\;\; 
y^2 = x^3 + \gamma x+ t^{11}-t
\end{eqnarray}
which comes equipped with the automorphism $\varphi$ of order $11$ as in \eqref{eq:phi}
and with an automorphism $\imath$ of order $4$:
\begin{eqnarray}
\label{eq:imath}
\imath: \;\; (x,y,t) \mapsto (-x,\sqrt{-1} y,-t).
\end{eqnarray}
Since the automorphism $\imath$ does not commute with $\varphi$,
its impact does not translate directly onto the cohomology eigenspaces $V_i$.
There is, however, an indirect argument for any $X_\gamma$ 
defined over some 
field $\F_q$ with $q=p^r, r$ odd.
Namely, because of the symmetry encoded in \eqref{eq:imath},
the pair $(x,t)\in\F_q^2$ gives a non-zero square in $\F_q$ upon substituting in the right-hand side of \eqref{eq:gamma} if and only if $(-x,-t)$ gives a non-square.
That is to say, the numbers of $\F_q$-rational points 
on the fibres $F_t$ and $F_{-t}$ of $X_\gamma$ add up to a constant number:
\[
\# F_t(\F_q) + \# F_{-t}(\F_q) = 2q+2 \;\;\; (t\in\PP^1(\F_q)).
\]
As a consequence, we deduce that
\[
\# X_\gamma(\F_q) = (q+1)^2.
\]
In comparison, Lefschetz' fixed point formula gives, with the decomposition $\het{\bar X_\eps} = U\otimes\Q_l \oplus V$, 
\begin{eqnarray}
\label{eq:lef}
\;\;\;\;
\# X(\F_q) = 1 + \tr {\Fr}^*_q(\het{\bar X_\eps}) + q^2 = 1 + 2q + \tr {\Fr}^*_q(V) + q^2.
\end{eqnarray}
Thus we deduce that 
\begin{eqnarray}
\label{eq:0}
\tr {\Fr}^*_q(V) = 0 \;\;\; \text{for any }\; q=p^r, r \text{ odd}.
\end{eqnarray}
Denote by $\mu_q(T)$  the characteristic polynomial  of $\Fr^*_q$ on $V$.
Then \eqref{eq:0} implies
\[
\mu_q(T) = \nu_q(T^2),\;\;\; \nu_q(T)\in\Z[T],\; \deg(\nu_q)=10.
\]
In fact we deduce right away that
\[
\mu_{q^2}(T) = \nu_q(T)^2.
\]
Now one can argue with the roots of this characteristic polynomial
as in the following section
and appeal to the Tate conjecture to rule out the alternative $\rho(X_\gamma)=12$.

\section{Proof of Theorem \ref{thm}}
\label{s:proof}

Essentially our proof of Theorem \ref{thm}
amounts to computing the characteristic polynomials $\mu_p$ of $\Fr^*_p$ on $V$
for (any of) the surfaces $X=X_\gamma$ or $X_\eps$ for $\gamma, \eps\in\F_p^\times$.
Then we use that the Galois group always acts through a finite group on $\NS(X)$,
so that
via the cycle class map $\NS(X)\otimes\Q_l(-1)\hookrightarrow\het{\bar X}$
\begin{eqnarray}
\label{eq:Tate}
\;\;\;\;
\rho(X) \leq \#\{\text{zeroes of $\mu_p(T)$ of the shape $\xi p$ for some root of unity $\xi$}\}.
\end{eqnarray}
Here equality is subject to the Tate conjecture \cite{Tate}
which holds for elliptic K3 surfaces by \cite{ASD}.
Without having to appeal to the Tate conjecture,
we will find that the right-hand side of \eqref{eq:Tate} gives an upper bound of $2$ for $\rho(X)$,
thus infering $\rho(X)=2$ by virtue of the classes of fibre and zero section generating $U$.
This will enable us to prove Theorem \ref{thm} for the family of jacobian elliptic K3 surfaces
with an automorphism of order $11$.
From this we will infer the full statement of Theorem \ref{thm} in \ref{ss:full}.

\subsection{Strategy}
We shall now outline the strategy how to compute the characteristic polynomials of $\Fr^*_p$
quite easily.
We will use Lefschetz' fixed point formula as in \eqref{eq:lef}.
Without further machinery it would require 
to count points as deep as down to the field $\F_{p^{10}}$ to find $\mu_p(T)$.
Presently we circumvent this by using the automorphism $\varphi$.
The crucial property is that $\varphi$ commutes with $\Fr_p$.
Hence the induced linear transformations on $\het{\bar X}$ can be diagonalised simultaneously,
and we can define the relative trace of Frobenius:
\[
a_i(q) = \tr {\Fr}^*_q(V_i) \;\;\; (i=0,\hdots,10).
\]
Note that $a_0(q)=2q$ always and $\tr {\Fr}^*_q(\het{\bar X}) = \sum_{i=0}^{10} a_i(q).$
Since $\varphi$ and $\Fr_p$ commute,
we have by definition
\[
\tr(\varphi^n\circ{\Fr}_q)^*(V_i) = \zeta^{ni}a_i(q).
\]
As a consequence, we find
\begin{eqnarray}
\label{eq:tr_n}
\;\;\; 
{\tr}_n(q) = \tr(\varphi^n\circ{\Fr}_q)^*(\het{\bar X}) = \sum_{i=0}^{10} \zeta^{ni}a_i(q) \;\;\; (n=0,\hdots,10).
\end{eqnarray}
Note that $\tr_n(q)\in\Z$ for any $n\in\{0,\hdots,10\}$,
and this integer can be computed by Lefschetz' fixed point formula again.
Conversely,
we can calculate each $a_i(q)$ from the collection of ${\tr}_n(q)$'s.

We continue by discussing how to efficiently compute ${\tr}_n(q)$.
Lefschetz' fixed point formula gives
\[
\# \mbox{Fix}(\varphi^n\circ{\Fr}_q) = 1 + {\tr}_n(q) + q^2.
\]
For $n=0$, the fixed locus is simply $X(\F_q)$.
We shall now analyse the fixed locus for ${n>0}$
and reduce the computations again to the field $\F_q$.
Clearly $\mbox{Fix}(\varphi^n\circ{\Fr}_q)$ contains
$q+1$ rational points on the zero section 
and at the fibre at $\infty$ exactly the $\F_q$-rational points;
these contribute $2q+1$ rational points in total.
It remains to study the fixed locus on the affine chart given in \eqref{eq:Xe} or \eqref{eq:gamma}.
Suppose that $(x,y,t)$ is a fixed point of $\varphi^n\circ {\Fr}_q$.
Keeping in mind $n>0$, the resulting conditions are first of all
\[
x^q=x,
y^q=y,
t^q+n=t
\;\;
{\Longrightarrow}
\;\;
x,y\in\F_q, t\not\in\F_q.
\]
But then we infer from the defining equation (over $\F_q$)
that $t^p-t=c\in\F_q$,
and any such $c$ occurs for exactly $p$ fibres.
Directly this implies
\begin{eqnarray}
\label{eq:c}
-n = t^q-t=c+c^p+\hdots+c^{p^{r-1}}={\tr}_{\F_q/\F_p} c.
\end{eqnarray}
We are now ready to compute all the quantities $\# \mbox{Fix}(\varphi^n\circ{\Fr}_q)$ at once.
Fix $\eps$ or $\gamma$ and pick any $(x,y)\in\F_q^2$.
Then this determines $c=t^p-t$ by \eqref{eq:Xe} or \eqref{eq:gamma}.
Compute $n$ by \eqref{eq:c}.
Then the affine point $(x,y)\in\F_q^2$ contributes exactly $p$-times to $\# \mbox{Fix}(\varphi^n\circ{\Fr}_q)$,
namely once for each fibre $F_t$ at a root of $t^p-t=c$.
Thus we can easily distribute all $(x,y)\in\F_q^2$ over all the fixed loci $ \mbox{Fix}(\varphi^n\circ{\Fr}_q)$.

\subsection{Execution}
\label{ss:exe}

For $\eps, \gamma\in\F_p$,
the above procedure enables us to calculate the characteristic polynomials of Frobenius
on $\het{\bar X}$
solely from computations over $\F_p$ and $\F_{p^2}$.
Below we tabulate the results for $V(1)$,
i.e.~we give the characteristic polynomial 
\[
\tilde\mu_p(T) = \mu_p(pT)/p^{20}
\]
on the 20-dimensional vector space $V$
with Galois representation shifted so that all eigenvalues have absolute value $1$.
Due to the shape of the equation, the characteristic polynomials only depend on the property
whether respectively $\eps$  or $\gamma$ is a square mod $p$ or not.

$$
\begin{array}{c|l}
\hline
\eps & \tilde\mu(T)\\
\hline
1,3,4,5,9 & T^{20}+T^{19}+2 T^{18}+T^{17}+T^{16}+2 T^{15}+3 T^{14}+5 T^{13}+4 T^{12}+3 T^{11}\\
& \; +\frac{23}{11} T^{10}+3 T^9+4 T^8+5 T^7+2 T^5+3 T^6+T^4+T^3+2 T^2+T+1\\
\\
2,6,7,8,10& T^{20}-T^{19}+2 T^{18}-T^{17}+T^{16}-2 T^{15}+3 T^{14}-5 T^{13}+4 T^{12}-3 T^{11}\\
& \; +\frac{23}{11} T^{10}-3 T^9+4 T^8-5 T^7-2 T^5+3 T^6+T^4-T^3+2 T^2-T+1\\
\\
\hline
\gamma & \tilde\mu(T)\\
\hline
1,3,4,5,9 & 1-4T^2+10 T^4-18 T^6+25 T^8-\frac{307}{11} T^{10}\\
& \;\;\;\; +25 T^{12}-18 T^{14}+10 T^{16}-4 T^{18}+T^{20}\\
\\
2,6,7,8,10 & 1-5T^2+12 T^4-18 T^6+20 T^8-\frac{219}{11} T^{10}\\
& \;\;\;\; +20 T^{12}-18 T^{14}+12 T^{16}-5 T^{18}+T^{20}
\end{array}
$$

All four characteristic polynomials are irreducible over $\Q$.
Since they are not integral, they are not cyclotomic.
Hence there are no roots of unity as zeroes,
and we deduce that $\rho(X)=2$ by \eqref{eq:Tate}.
This proves Theorem \ref{thm} for the family $\{X_\varepsilon\}$ 
(or equivalently $\{X_\gamma\}$),
i.e.~for those K3 surfaces
with a jacobian elliptic fibration compatible with the automorphism of order $11$.

\subsection{Easy consequences}

Before continuing with the proof of Theorem \ref{thm} for all K3 surfaces,
we note two corollaries. The first is straight-forward:

\begin{Corollary}
Any elliptic K3 surface over $\C$ reducing mod $p$ to some $X_\eps$ or $X_\gamma$ 
for $\eps,\gamma\in\F_p^\times$ has Picard number $2$.
\end{Corollary}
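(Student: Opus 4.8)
The plan is to deduce the statement from the computation just carried out, using that Picard numbers cannot drop under specialisation. The hypothesis means that the given surface is defined over a number field $K$ and admits a smooth proper model $\fX$ over the localisation $\OO_{K,\p}$ at some prime $\p\mid p$ whose special fibre $\fX_s$ becomes isomorphic to $X_\eps$ (resp.\ $X_\gamma$) over $\bar\F_p$; this makes sense precisely because $X_\eps$ and $X_\gamma$ are smooth K3 surfaces for $\eps,\gamma\in\F_p^\times$. Write $\mathcal X$ for the $K$-model and $\mathcal X_\C$, $\mathcal X_{\bar\Q}$ for its base changes.

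First I would pass from $\C$ to $\bar\Q$: the Néron--Severi group of a smooth projective surface is finitely generated, and its generators are already defined over a subfield of $\C$ finitely generated over $\Q$, hence over $\bar\Q$ after choosing an embedding, so that $\rho(\mathcal X_\C)=\rho(\mathcal X_{\bar\Q})$. Next I would invoke the specialisation homomorphism on (geometric) Néron--Severi groups attached to $\fX$, obtained after base change to a strict henselisation of $\OO_{K,\p}$: it is injective and compatible with the intersection pairing, so
\[
\rho(\mathcal X_{\bar\Q})\le\rho(\fX_s\otimes\bar\F_p)=\rho(X_\eps\otimes\bar\F_p).
\]
Now \ref{ss:exe} shows that $\tilde\mu_p$ is irreducible over $\Q$ and non-integral, hence not cyclotomic, so none of its roots is a root of unity; consequently $\Fr_p^*$ has no eigenvalue on $V$ of the form $\xi p$ with $\xi$ a root of unity, and \eqref{eq:Tate} forces $\rho(X_\eps\otimes\bar\F_p)=2$, and likewise $\rho(X_\gamma\otimes\bar\F_p)=2$. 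Combining the two displayed inequalities yields $\rho(\mathcal X_\C)\le 2$. The reverse inequality is automatic, since an elliptic K3 surface carries an isotropic fibre class together with an ample class meeting it positively, and these are independent in $\NS$; thus $\rho(\mathcal X_\C)=2$.

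I do not expect a genuine obstacle here. The one input cited rather than proved is the injectivity of the specialisation map on Néron--Severi groups, which is classical (compare \cite{ASD}); the arithmetic substance --- the shape of $\tilde\mu_p$ and its irreducibility --- has already been supplied in \ref{ss:exe}, so what remains is purely formal bookkeeping.
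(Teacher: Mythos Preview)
Your argument is correct and is exactly the standard deduction the paper has in mind when it calls the corollary ``straight-forward'' without further comment: bound $\rho$ above by specialisation to the reduction (whose Picard number was just computed to be $2$ in \ref{ss:exe}), and bound it below by $2$ using the fibre class together with any ample class. The only cosmetic remark is that the reference \cite{ASD} is not really about the specialisation injectivity of N\'eron--Severi groups; that input is classical and goes back rather to smooth proper base change or to the treatment in SGA, but this does not affect the mathematics.
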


For the second corollary,
we recall the notion of height for a K3 surface $X$ in positive characteristic.
Generally this is defined in terms of the formal Brauer group $\hat{\Br}(X)$
and ranges through $\{1,2,\hdots,10\}$ and $\infty$
where the last case was coined {\it supersingular} by Artin \cite{A}.
Note that in case of finite height $h$,
there is an inequality 
\[
\rho\leq b_2-2h.
\]
If the K3 surface is defined over some finite field,
then the height translates into arithmetic 
through the Newton polygon of the characteristic polynomial of Frobenius.
This collects the $\pi$-adic valuations of all eigenvalues
at some prime $\pi$ above $p$ in the splitting field of the characteristic polynomial.
For instance, height $1$ corresponds to the so-called {\it ordinary} case where Hodge and Newton polygon coincide,
i.e.~some eigenvalue is a $\pi$-adic unit.
Equivalently, ${\tr}\, {\Fr}^*_q(\het{\bar X}) \not\equiv 0\mod p$.
Presently we find that all pairs of complex conjugate eigenvalues on the $20$-dimensional subspace $V\subset \het{\bar X}$ have 
evaluations $9/10, 11/10$. Hence we get the following corollary:

\begin{Corollary}
\label{Cor:h}
The K3 surfaces  $X_\eps$ and $X_\gamma$ 
for $\eps,\gamma\in\F_p^\times$
have height $10$.
\end{Corollary}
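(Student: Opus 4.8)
The plan is to read the height off the Newton polygon of Frobenius, which the computation in \ref{ss:exe} has essentially already produced. Recall (see e.g.\ \cite{A}) that a K3 surface over a finite field of characteristic $p$ has finite height $h$ exactly when the characteristic polynomial of $\Fr_p^*$ on $\het{\bar X}$ has $p$-adic Newton polygon (normalised so that $v_p(p)=1$) with slopes $1-\tfrac1h$, $1$, $1+\tfrac1h$ of multiplicities $h$, $22-2h$, $h$, while the surface is supersingular precisely when all slopes equal $1$. So it suffices to determine the valuations of the $20$ eigenvalues of $\Fr_p^*$ on $V$ in each of the four cases and to combine them with the contribution of $U$.

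First I would recover the genuine characteristic polynomial $\mu_p$ of $\Fr_p^*$ on $V$ from the normalised $\tilde\mu_p$ in the table: writing $\tilde\mu_p(T)=\sum_{i=0}^{20}c_iT^i$, one has $\mu_p(T)=p^{20}\tilde\mu_p(T/p)=\sum_{i=0}^{20}c_i\,p^{20-i}T^i$, so the lattice point attached to the $i$-th coefficient of $\mu_p$ is $\bigl(i,\;20-i+v_p(c_i)\bigr)$. Scanning the four polynomials, the only coefficient that fails to be a $p$-adic integer is $c_{10}$, equal to $\tfrac{23}{11}$, $\tfrac{307}{11}$ or $\tfrac{219}{11}$, whose numerators $23,307,219$ are all prime to $11$; every other nonzero $c_i$ is an integer prime to $11$. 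Hence the relevant lattice points are $(0,20)$, $(10,9)$, $(20,0)$ together with points of the form $(i,20-i)$ for the remaining indices, and a one-line check shows that the latter all lie on or above the broken line through the former three. Therefore the Newton polygon of $\mu_p$ consists of a segment of slope $-\tfrac{11}{10}$ over $[0,10]$ and one of slope $-\tfrac{9}{10}$ over $[10,20]$: ten eigenvalues of valuation $\tfrac{11}{10}$ and ten of valuation $\tfrac{9}{10}$, consistent with the Weil bound $|\alpha|=p$ as $\tfrac{9}{10}+\tfrac{11}{10}=2$.

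It remains to add in $U$. Since the elliptic fibration and its zero section are defined over $\F_p$, the generators $F$ and $O$ of $U$ are Frobenius-invariant up to the Tate twist, so $\Fr_p^*$ acts on $U\otimes\Q_l$ with characteristic polynomial $(T-p)^2$, i.e.\ two eigenvalues of valuation $1$. Consequently the Newton polygon of $\Fr_p^*$ on the full $\het{\bar X}$ has slopes $\tfrac{9}{10},1,\tfrac{11}{10}$ with multiplicities $10,2,10$; comparing with the dictionary above — and noting that $22-2h=2$ pins down $h=10$ — we conclude $h(X_\eps)=h(X_\gamma)=10$, the largest finite value the height of a K3 surface can attain, compatibly with $\rho\le b_2-2h=2$ from \ref{ss:exe}.

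The main (and essentially only) point needing care is the bookkeeping: the normalisation $v_p(p)=1$, and the fact that the slope-$1$ part of $\het{\bar X}$ has multiplicity exactly $2$, forced by the summand $U$; after that it is a routine reading-off of the Newton polygon from the tabulated coefficients. The weaker ordinary/non-ordinary dichotomy could be settled just from $\tr\Fr_p^*\pmod{11}$, but establishing $h=10$ rather than merely $h>1$ genuinely uses the explicit $\mu_p$.
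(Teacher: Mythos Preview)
Your proof is correct and follows exactly the approach of the paper: read off the Newton polygon of $\Fr_p^*$ from the tabulated characteristic polynomials $\tilde\mu_p$, observe that the twenty eigenvalues on $V$ have $p$-adic valuations $9/10$ and $11/10$, and combine with the two eigenvalues of valuation $1$ coming from $U$ to conclude $h=10$. The paper compresses this into a single sentence, whereas you spell out the Newton-polygon bookkeeping in detail, but the argument is the same.
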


%
%
%
%
%
%
%
%
%
%
%

It is instructive to note that while our K3 surfaces are not supersingular
(in either sense, $h=\infty$ or $\rho=22$),
in terms of height they are as close to being supersingular as possible.
We should also point out that we are not aware of any other explicit examples
of K3 surfaces of height $10$.

\subsection{Proof of Theorem \ref{thm}}
\label{ss:full}

Recall that K3 surfaces with an automorphism $\varphi$ of order $11$
come in families consisting of torsors over the family $\{X_\eps\}$.
In fact,  any such K3 surface $X$ comes with an elliptic fibration 
whose jacobian $\mbox{Jac}(X)$ equals $X_\eps$ for some $\eps\in k$.
Generally one has by \cite[Cor.~5.3.5]{CD}
\[
\rho(X) = \rho(\mbox{Jac}(X)).
\]
Hence the result from \ref{ss:exe}
that a generic member of the family $\{X_\eps\}$ has Picard number $2$
carries over directly to all other families.
This proves Theorem \ref{thm}.

\subsection{Remark}

We should like to emphasise the conceptual difference to the standard case
of symplectic automorphisms over $\C$.
Given a finite group $G$ of automorphisms acting symplectically on a complex K3 surface $X$,
 Nikulin proved in \cite{N} that there is an abstract lattice $\Omega_G$ of fairly large rank such that
\[
(H^2(X,\Z)^G)^\bot = \Omega_G.
\]
Since the transcendental lattice $T(X)$ is contained in $H^2(X,\Z)^G$ by assumption,
we deduce that $X$ has fairly large Picard number.
In contrast, 
symplectic automorphisms of order $11$
(necessarily wild) imply generally $\rho=2$
as we have seen.

\section{Other characteristics}
\label{s:p}

In \cite[Rem.~6.6]{DK}, the authors give an elliptic K3 surface over $\Q$ with a
non-symplectic automorphism of order $11$
and good reduction outside $\{2,3,11\}$.
They ask whether there is a K3 surface with an automorphism of order 11 in characteristic $2$ and $3$ as well.
Moreover they pose the problem whether there is a supersingular example in any possible characteristic,
i.e.~by \cite{Ny} whenever there is some $\nu$ such that $p^\nu\equiv -1\mod 11$.
Equivalently $p$ is a non-square in $\F_{11}$.
Here we present a uniform answer in terms of the extended Weierstrass form
\begin{eqnarray}
\label{eq:uni}
y^2 + xy = x^3+t^{11}.
\end{eqnarray}

\begin{Proposition}
\label{prop}
Let $k$ be an algebraically closed field
and $X$  the elliptic K3 surface given by the Weierstrass form \eqref{eq:uni} over $k$.
Then $X$ admits an automorphism of order $11$.
Moreover $X$ is supersingular with $\rho(X)=22$ whenever char$(k)$ either  equals $11$
or is a non-square modulo $11$.
\end{Proposition}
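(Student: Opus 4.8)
The plan is to split the statement into two parts: first the existence of the order-$11$ automorphism, which should hold over any base; then the supersingularity in the two stated cases. For the automorphism, I would complete the square (when $p\neq 2$) or otherwise transform \eqref{eq:uni} into a form displaying the substitution $t\mapsto t+1$ as a symmetry, mimicking \eqref{eq:phi}: the key point is that the fibre at $t=\infty$ is the only place where the eleven preimages of an additive shift on $\A^1_t$ can fail to be permuted, and since $y^2+xy=x^3$ defines the unique cuspidal cubic there, the shift extends to an automorphism of $X$. Concretely I would exhibit $\psi\colon(x,y,t)\mapsto(x,y,t+1)$ and check that it preserves \eqref{eq:uni}, has order $11$ in characteristic $\neq 11$ (where it cyclically permutes eleven fibres) and order $11$ in characteristic $11$ as well (where it is the wild case), and finally verify $X$ is a K3 surface by the standard count of the Weierstrass data (degree of the coefficients, type of singular fibres).

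For supersingularity I would argue by reduction to the case $k=\bar\F_{11}$ via the machinery already invoked in Section \ref{s:background}. When $\mathrm{char}(k)=11$, I would show that \eqref{eq:uni} over $\bar\F_{11}$ is isomorphic to $X_0$ from \eqref{eq:Xe}: indeed both carry a wild automorphism of order $11$, so by \cite[Thm.~1.1]{DK} both are governed by the family $\{X_\eps\}$, and a direct change of variables should identify \eqref{eq:uni} with the member $\eps=0$ (or exhibit it as a torsor thereof with the same jacobian, using $\rho(X)=\rho(\mathrm{Jac}(X))$ from \cite[Cor.~5.3.5]{CD}). Then the computation in Section \ref{s:background} gives $\rho(X_0)=22$, i.e.\ $X$ is supersingular with Artin invariant $1$. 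When $p$ is a non-square modulo $11$, so that $p^\nu\equiv-1\bmod 11$ for some $\nu$, I would lift \eqref{eq:uni} to characteristic zero — it is already defined over $\Q$ — and observe that the automorphism of order $11$ acts on the transcendental lattice, which over $\C$ is a $\Z[\zeta_{11}]$-module; hence $21$ divides $22-\rho(X_\C)$ forces either $\rho(X_\C)=22$ or $\rho(X_\C)=1$, but an elliptic K3 surface has $\rho\geq 2$, so $\rho(X_\C)=2$ and the transcendental lattice has rank $20$ with a $\Z[\zeta_{11}]$-action, hence is (up to finite index) free of rank $2$ over $\Z[\zeta_{11}]$. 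The Frobenius at a prime above $p$ then acts on this rank-$2$ module compatibly with the $\zeta_{11}$-action; since $p^\nu\equiv-1$, an elementary argument on the eigenvalues (their ratio is a root of unity whose Galois orbit has size divisible by the order of $p$ modulo $11$) shows all twenty transcendental eigenvalues are of the form $p\cdot(\text{root of unity})$ after passing to a suitable power of Frobenius, whence $\rho$ of the reduction is $22$.

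The main obstacle I expect is the last step: controlling the Frobenius eigenvalues on the $20$-dimensional transcendental part for general $p$ with $p^\nu\equiv-1\bmod 11$, rather than just $p=11$ where the explicit zeta-function computation of Section \ref{s:proof} does the job. The cleanest route is probably not eigenvalue bookkeeping at all but an appeal to \cite{Ny}: a K3 surface over a finite field whose transcendental cohomology is an induced representation from $\Z[\zeta_{11}]$ with $p^\nu\equiv-1$ is forced to be supersingular because the characteristic polynomial of Frobenius must then be a product of terms $(T^2-p^2)$ up to roots of unity — equivalently the Newton polygon is a straight line — which is exactly Artin's criterion. I would phrase this via the observation that $\zeta_{11}\mapsto\zeta_{11}^p$ has even order dividing the relevant index, so complex conjugation lies in the cyclic group generated by Frobenius acting on the coefficient field, forcing each eigenvalue and its conjugate $p^2/\lambda$ into the same Galois orbit and hence $\lambda/\bar\lambda$ to be a root of unity; then $\rho=22$ follows from the Tate conjecture for elliptic K3 surfaces \cite{ASD}, exactly as in \eqref{eq:Tate}. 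Once this is in place, the characteristic-$11$ case and the non-square case are unified, which is the ``surprisingly uniform'' answer promised in the introduction.
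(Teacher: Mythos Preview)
Your proposal has a concrete error at the very first step. The map $\psi\colon(x,y,t)\mapsto(x,y,t+1)$ does \emph{not} preserve the equation $y^2+xy=x^3+t^{11}$: substituting gives $x^3+(t+1)^{11}$, which equals $x^3+t^{11}+1$ in characteristic $11$ and something worse otherwise. The equation \eqref{eq:uni} is not of the Artin--Schreier type that \eqref{eq:Xe} exploits; its symmetry is multiplicative, not additive. For $p\neq 11$ the paper uses $t\mapsto\zeta t$ with $\zeta$ a primitive $11$th root of unity in $k$, which works because $(\zeta t)^{11}=t^{11}$. For $p=11$ there is no such $\zeta$, and the paper does \emph{not} exhibit the automorphism directly: instead it reads off $\NS(X)\supseteq U\oplus A_{10}^2$ from the two $I_{11}$ fibres (at $t=0$ and $t=7$), gets $\rho=22$ and Artin invariant $1$ immediately, and only then concludes $X\cong X_0$ by Ogus's uniqueness theorem --- so the order-$11$ automorphism in characteristic $11$ is inherited from $X_0$ \emph{after} supersingularity is established, which is the reverse of your plan.

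For the non-square primes your eigenvalue/$\Z[\zeta_{11}]$-module argument is in the right spirit but, as you yourself flag, the step ``$\lambda$ and $p^2/\lambda$ lie in the same Galois orbit, hence $\lambda/\bar\lambda$ is a root of unity'' is not a deduction: complex conjugates are \emph{always} Galois conjugate over $\Q$, so this alone cannot force $\lambda/p$ to be a root of unity. One can make an argument of this flavour work, but it requires tracking how $\Fr_p$ permutes the eigenspaces $V_i\mapsto V_{pi}$ and using that the orbit length is even. The paper bypasses all of this with a one-line geometric argument: $X$ is a Delsarte surface dominated by the degree-$11$ Fermat surface $S\subset\PP^3$ via an explicit rational map, and $S$ is unirational over $\bar\F_p$ precisely when $p$ is a non-square modulo $11$ by Katsura--Shioda, so $X$ is unirational and hence supersingular by Shioda's theorem. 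This is both shorter and avoids any appeal to the Tate conjecture.
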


\begin{proof}
It is easily computed that \eqref{eq:uni} gives indeed
an elliptic K3 surface $X$ of discriminant $t^{11}(432\,t^{11}+1)$.
There are singular fibres of Kodaira type $II$ at $\infty$, $I_{11}$ at $0$ and, depending on $p=$ char$(k)$,
\begin{itemize}
\item
11 fibres of type $I_1$ at the roots of $432\,t^{11}+1$ if $p\neq 2,3,11$;
\item
1 fibre of type $I_{11}$ at $t=7$ if $p=11$;
\item
no other singular fibres if $p=2,3$
as there is wild ramification of index $11$ at $\infty$.
\end{itemize}
On the one hand, if $p\neq 11$, then the automorphism of order $11$ thus is given by $t\mapsto \zeta t$
for a primitive 11th root of unity $\zeta\in k$.
On the other hand, if $p=11$,
then we read off $\NS(X)=U+A_{10}^2$.
In particular, $\rho(X)=22$
and $X$ is a supersingular K3 surface of Artin invariant $\sigma=1$.
As such, it is unique up to isomorphism by \cite{Ogus}.
As in Section \ref{s:background}, $X$ is therefore isomorphic to $X_0$  from \eqref{eq:Xe}.
We conclude that $X$ admits an automorphism of order $11$.

To prove Proposition \ref{prop}
it remains to justify the claim about supersingularity for  non-squares $p$ modulo $11$.
For this purpose, we note that $X$ is a Delsarte surface
covered by the Fermat surface $S$ of degree $11$.
Writing affinely 
\[
S=\{u^{11}+v^{11}+w^{11}+1=0\}\subset\PP^3,
\]
the dominant rational map
is given by
\begin{eqnarray*}
S\;\;\;\;\; & \dasharrow & \;\;\;\;\;X\\
(u,v,w) & \mapsto & (x,y,t) = (-u^{11}v^{11},-u^{22}v^{11}, -wu^3v^2)
\end{eqnarray*}
Since $S$ is unirational over $\bar\F_p$ for any non-square $p$ modulo $11$
by \cite{KS}, so is $X$.
Thus the supersingularity of $X$ follows from Shioda's theorem \cite{Sh-unirat}.
\end{proof}

\begin{Remark}
For char$(k)=0$ or a non-zero square modulo $11$,
the argument with the covering Fermat surface can be used to show that $\rho(X)=2$
by \cite{Sh}.
\end{Remark}

\subsection*{Acknowledgements}
I thank Igor Dolgachev and JongHae Keum  for enlightening discussions
and the referee for his/her helpful comments.

\end{document}